\newtheorem{theorem}{Theorem}
\newtheorem{proposition}[theorem]{Proposition}
\newtheorem{lemma}[theorem]{Lemma}
\newtheorem{corollary}[theorem]{Corollary}
\theoremstyle{definition}
\newtheorem{example}[theorem]{Example}
\newcommand{\idealztn}[2]{#1\mathop{(\! + \! )}#2}
\newcommand{\ideal}[1]{\left< #1 \right>}
\begin{document}

\author{S. Eftekhari, S.H. Jafari and M.R. Khorsandi$^{*}$} 

\title[Factorization of idealization in commutative rings]
{Some factorization properties of idealization in commutative rings with zero divisors}
\subjclass[2020]{13F15, 13B99} \keywords{Factorization, idealization, ACCP, BFR, UFR}
\thanks{$^*$Corresponding author}
\thanks{E-mail addresses: sina.eftexari@gmail.com, shjafari@shahroodut.ac.ir and \\ 
khorsandi@shahroodut.ac.ir}

\maketitle

\begin{center}
{\it Faculty of Mathematical Sciences, Shahrood University of Technology,\\
P.O. Box 36199-95161, Shahrood, Iran.}
\end{center}

\begin{abstract}
We study some factorization properties of the idealization $R \mathop{(\! + \! )} M$ of a module $M$ in a commutative ring $R$ which is not necessarily a domain. We show that $R \mathop{(\! + \! )} M$ is ACCP if and only if $R$ is ACCP and $M$ satisfies ACC on its cyclic submodules. We give an example to show that the BF property is not necessarily preserved in idealization, and give some conditions under which $R \mathop{(\! + \! )} M$ is a BFR. We also characterize the idealization rings which are UFRs.
\end{abstract}

\maketitle

\section{Introduction}
Throughout this paper, all rings are commutative with identity and all modules are unital.  Anderson and Valdes-Leon \cite{anderson1996factorization} provided a framework for studying factorization in commutative rings which are not necessarily domains. One of the important constructions in commutative algebra which always results in rings with nontrivial zero divisors is the idealization of a (nonzero) module. Let $R$ be a ring and $M$ be an $R$-module. The set $R \times M$ with the multiplication $(r_1,x_1)(r_2,x_2) = (r_1r_2, r_1x_2 + r_2x_1)$ and with natural addition is a ring called the \textit{idealization} of $M$ in $R$, and is denoted by $\idealztn{R}{M}$. This construction can also be viewed in matrix form as $\left\{ \begin{psmallmatrix} r & x \\ 0 & r \end{psmallmatrix} \mid r \in R , x \in M \right\}$. If $M = R$, the ring $\idealztn{R}{R}$, which is called the \textit{self-idealization} of $R$, can also be viewed as the ring $R[X]/\left< X^2 \right>$. For an introduction to idealization and its properties we refer the reader to \cite[Section~25]{huckaba1988commutative}. Also \cite{anderson2009idealization}, besides providing new results, is a great survey on this topic. In fact, \cite[Section~5]{anderson2009idealization} is about the factorization properties of rings that are constructed by idealization.

We recall some of the basic properties of idealization \cite[Theorem~25.1]{huckaba1988commutative}. An element $(r, x) \in \idealztn{R}{M}$ is a unit if and only if $r$ is a unit in $R$. A subset of $\idealztn{R}{M}$ of the form $\idealztn{I}{N}$ is an ideal if and only if $I$ is an ideal of $R$, $N$ is a submodule of $M$, and $IM \subseteq N$. Moreover, a prime ideal of $\idealztn{R}{M}$ is of the form $\idealztn{\mathfrak{p}}{M}$ where $\mathfrak{p}$ is a prime ideal in $R$. Similarly, maximal ideals of $\idealztn{R}{M}$ has the form $\idealztn{\mathfrak{m}}{M}$ for some maximal ideal $\mathfrak{m}$ of $R$. Finally, for ideals $\idealztn{I_i}{N_i}$ ($i = 1,2$), we have $(\idealztn{I_1}{N_1})(\idealztn{I_2}{N_2}) = \idealztn{I_1I_2}{(I_1N_2 + I_2N_1)}$.

A ring $R$ is called ACCP when every ascending chain of principal ideals of $R$ stabilizes. Similarly, a module with ACC on its cyclic submodules is called ACCC. A ring $R$ is called a \emph{bounded factorization ring} or BFR if for every nonzero nonunit element $r \in R$ there exists an $n_r \in \mathbb{N}$ such that if $r = r_1 \dotsm r_n$ for nonunit $r_i$'s, then $n \leq n_r$. Similarly, an $R$-module $M$ is called a BFM (or a BF R-module) if for every nonzero $x \in M$, there exists $n_x \in \mathbb{N}$ such that if $x= r_1 \dotsm r_n y$, where $r_i$'s are nonunits in $R$ and $y \in M$, then $n \leq n_x$. We can also say that a ring (or a module, or an element) has the BF property. Also, if $R$ is a domain, we usually use the abbreviation BFD instead of BFR.

There are a number of ways to generalize the notion of irreducibility to commutative rings with zero divisors (see \cite[Section~2]{anderson1996factorization}). We only need the following definitions: We call two elements $a, b \in R$ \emph{associates} and write $a \sim b$ if $\ideal{a} = \ideal{b}$, and a nonunit element $a \in R$ is called \emph{irreducible} or an \emph{atom} if when $a = bc$, then either $a \sim b$ or $a \sim c$. A ring $R$ is called \emph{pr\'{e}simplifiable} (introduced by Bouvier \cite{Bouvier1974}) if whenever $a=ab$, then either $a=0$ or $b$ is a unit. These rings are quite important in factorization theory since many factorization properties of domains also hold in them (see \cite{anderson1996factorization}). Any BFR is pr\'{e}simplifiable (see \cite[p.~456]{anderson1996factorization}), but an ACCP ring is not necessarily pr\'{e}simplifiable (consider any ACCP ring with a nontrivial idempotent, like $F \times F$ where $F$ is a field). A ring $R$ is called a UFR if any nonzero nonunit element of $R$ can be written as a product of atoms and this factorization is unique up to order and associates. Obviously, any UFR is a BFR, and any BFR is ACCP.

Current results on the factorization properties of $\idealztn{R}{M}$ are mostly for the case where $R$ is a domain (see \cite{agargun2001factorization, anderson1996factorization, anderson2009idealization, axtell2002u, axtell2017factorizations, chang2013factorization}). The goal of this paper is to generalize some of these results to arbitrary commutative rings.

First, we show that for any ring $R$ and $R$-module $M$, the ring $\idealztn{R}{M}$ is ACCP if and only if $R$ is ACCP and $M$ is ACCC. Then, we give an example to show that the BF property is not necessarily preserved in the idealization and we provide some sufficient conditions under which $\idealztn{R}{M}$ becomes a BFR. Finally, we characterize the rings $\idealztn{R}{M}$ which are UFRs.

\section{Results}

When $R$ is a domain, $\idealztn{R}{M}$ is ACCP if and only if $R$ is ACCP and $M$ is ACCC (\cite[Theorem~5.2(2)]{anderson1996factorization}). We show that this is also the case for arbitrary rings  though we need a completely different approach.

A quotient module of an ACCC module is not necessarily ACCC. However, in the next lemma we show that certain chains of cyclic submodules in a quotient module of an ACCC module stabilize. The idea is similar to what Frohn has done in the proof of \cite[Lemma~1]{frohn2004accp}.

\begin{lemma}
\label{lemma1}
Let $M$ be an $R$-module which is ACCC and let N be an $R$-submodule of $M$. Also, suppose that $R\overline{x_1} \subseteq R\overline{x_2} \subseteq \dotsb$ is an ascending chain of cyclic submodules in $M/N$, and $\overline{x_n} = r_{n}\overline{x_{n+1}}$ for some $r_n \in R$. If $N = r_iN$ for every $i$, then the chain $R\overline{x_1} \subseteq R\overline{x_2} \subseteq \dotsb$ stabilizes.
\end{lemma}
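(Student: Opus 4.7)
The plan is to lift the chain $R\overline{x_1} \subseteq R\overline{x_2} \subseteq \dotsb$ in $M/N$ to a chain of cyclic submodules in $M$, and then invoke the ACCC hypothesis on $M$. The catch is that a naive lift $Rx_1 \subseteq Rx_2 \subseteq \dotsb$ need not be a chain at all, because the relation $\overline{x_n} = r_n \overline{x_{n+1}}$ only says that $x_n = r_n x_{n+1} + z_n$ for some $z_n \in N$; the error term $z_n$ must be absorbed, and this is exactly where the hypothesis $N = r_i N$ is used.

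Concretely, I would construct inductively a sequence $y_1,y_2,\dotsc \in M$ with $\overline{y_n} = \overline{x_n}$ and $y_n = r_n y_{n+1}$. Set $y_1 = x_1$. Having built $y_1,\dotsc,y_n$ with $y_n - x_n \in N$, write $x_n = r_n x_{n+1} + z_n$ with $z_n \in N$, so that
\[
 y_n = r_n x_{n+1} + z_n + (y_n - x_n).
\]
Since $y_n - x_n \in N = r_n N$, both $z_n$ and $y_n - x_n$ lie in $r_n N$, so their sum equals $r_n w_n$ for some $w_n \in N$. Then define $y_{n+1} = x_{n+1} + w_n$; this satisfies $\overline{y_{n+1}} = \overline{x_{n+1}}$ and $y_n = r_n y_{n+1}$, closing the induction.

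The resulting chain $Ry_1 \subseteq Ry_2 \subseteq \dotsb$ in $M$ must stabilize by ACCC, say $Ry_n = Ry_{n+1}$ for $n \geq n_0$. Reducing modulo $N$ and using $\overline{y_n} = \overline{x_n}$ gives $R\overline{x_n} = R\overline{x_{n+1}}$ for $n \geq n_0$, which is the desired conclusion.

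The only nontrivial step is the inductive construction of the $y_n$'s, and the only place where any hypothesis beyond ``$M/N$ has an ascending chain'' enters is in the ability to write $z_n + (y_n - x_n) = r_n w_n$ with $w_n \in N$. This is precisely the content of the assumption $N = r_n N$ (applied to a single element of $N$), so the hypothesis is used in exactly the right spot; I do not expect any further obstacle.
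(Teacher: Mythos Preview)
Your proposal is correct and follows essentially the same approach as the paper: both lift the chain to $M$ by inductively replacing each $x_n$ by $x_n + (\text{element of }N)$, using $N = r_n N$ to absorb the error terms, and then apply ACCC to the lifted chain. The paper writes the adjustment as $y_1/r_1$ (a chosen preimage in $N$) rather than setting up a formal induction, but the construction and the use of the hypothesis are identical.
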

\begin{proof}

First, we note that
\begin{align*}
x_1 &= r_1x_2 + y_1 \quad (\text{for some} \; y_1\in N) \\
&=r_1(x_2 + y_1/r_1),
\end{align*}
where by $y_1/r_1$ we mean a fixed element in $N$ such that $r_1(y_1/r_1) = y_1$.

Now
\begin{align*}
x_2 + y_1/r_1 &= r_2x_3 + y_2 + y_1/r_1 \quad (\text{for some} \; y_2\in N) \\
&= r_2(x_3 + y_2/r_2 + (y_1/r_1)/r_2).
\end{align*}
Proceeding this way, we get a chain
$
Rx_1 \subseteq R(x_2 + z_2) \subseteq R(x_3 + z_3) \subseteq \dotsb
$
where $z_i \in N$. Since $M$ is ACCC, we have
$
R(x_k + z_k) = R(x_{k+1} + z_{k+1}) = \dotsb
$
for some $k$.
Hence, for every $n \geq k$, $ x_{n+1} + z_{n+1} = s_nx_n + s_n z_n$ for some $s_n \in R$, and so
\begin{align*}
x_{n+1} &= s_nx_n + s_n z_n - z_{n+1}
\in s_nx_n + N .
\end{align*}
Therefore, the chain $R\overline{x_1} \subseteq R\overline{x_2} \subseteq \dotsb$ stabilizes.
\end{proof}

\begin{theorem}
\label{ACCPidealztn}
For any ring $R$ and $R$-module $M$, $\idealztn{R}{M}$ is ACCP if and only if $R$ is ACCP and $M$ is ACCC.
\end{theorem}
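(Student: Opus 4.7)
I would prove the two directions separately. For the forward direction, I would lift chains: an ascending chain $r_1R \subseteq r_2R \subseteq \cdots$ in $R$ becomes the chain of principal ideals $(r_n,0)\idealztn{R}{M} = r_nR \times r_nM$ in $\idealztn{R}{M}$, and an ascending chain $Rx_1 \subseteq Rx_2 \subseteq \cdots$ of cyclic submodules of $M$ becomes the chain of principal ideals $(0,x_n)\idealztn{R}{M} = \{0\} \times Rx_n$. Under ACCP of $\idealztn{R}{M}$, both lifted chains stabilize, giving ACCP for $R$ and ACCC for $M$.

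For the converse, I would start from an ascending chain $I_n = (r_n,x_n)\idealztn{R}{M}$. The containment $(r_n,x_n) \in I_{n+1}$ produces $s_n \in R$ and $m_n \in M$ with $r_n = r_{n+1}s_n$ and $x_n = s_n x_{n+1} + r_{n+1}m_n$. The chain $r_1R \subseteq r_2R \subseteq \cdots$ in $R$ stabilizes by ACCP, so for $n \geq N_0$ one has $r_nR = r_{N_0}R$, and hence $r_nM = r_{N_0}M =: N$. Reducing the second relation modulo $N$ gives $\overline{x_n} = s_n \overline{x_{n+1}}$ in $M/N$, so we have an ascending chain of cyclic submodules $R\overline{x_{N_0}} \subseteq R\overline{x_{N_0+1}} \subseteq \cdots$ in $M/N$ with multipliers $s_n$. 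A short calculation using $r_nR = r_{n+1}R$ shows $s_n r_{N_0}R = r_{N_0}R$, whence $s_nN = N$ for every $n \geq N_0$, so the hypotheses of Lemma~\ref{lemma1} are met. The lemma then yields $R\overline{x_n} = R\overline{x_{n+1}}$ in $M/N$ for all sufficiently large $n$; in particular there exist $t_n \in R$ with $\overline{x_{n+1}} = t_n \overline{x_n}$.

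The main obstacle is the final step: passing from these two separate stabilizations (of $r_nR$ in $R$ and of $R\overline{x_n}$ in $M/N$) to stabilization of the ideal chain itself, which requires finding a \emph{single} element $s \in R$ satisfying both $r_ns = r_{n+1}$ and $s\overline{x_n} = \overline{x_{n+1}}$ in $M/N$; the two stabilizations a priori supply multipliers that need not be compatible. My plan to resolve this is to exploit the relation between the two multipliers: choose any $\alpha$ with $r_{n+1} = r_n\alpha$, and observe that $a := \alpha s_n - 1$ annihilates $r_n$ (since $r_n\alpha s_n = r_{n+1}s_n = r_n$); then set $s := \alpha - a t_n$. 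A direct verification gives $r_ns = r_n\alpha = r_{n+1}$ using $r_na = 0$, while in $M/N$ one expands $\alpha\overline{x_n} = \alpha s_n\overline{x_{n+1}} = (1+a)\overline{x_{n+1}} = \overline{x_{n+1}} + a t_n\overline{x_n}$, whence $s\overline{x_n} = \overline{x_{n+1}}$. This produces $m \in M$ with $(r_{n+1},x_{n+1}) = (r_n,x_n)(s,m)$, so $I_n = I_{n+1}$ for all sufficiently large $n$ and the chain stabilizes.
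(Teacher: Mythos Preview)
Your proof is correct and shares with the paper the reduction via Lemma~\ref{lemma1}: both arguments stabilize $r_nR$ in $R$, set $N=r_kM$, verify $s_nN=N$, and invoke the lemma to stabilize $R\overline{x_n}$ in $M/N$. The approaches diverge only at the final step of upgrading these two separate stabilizations to $(r_n,x_n)T=(r_{n+1},x_{n+1})T$. The paper observes that $s_n(N+Rx_n)=N+Rx_n$ and, using that $M$ (hence $N+Rx_n$) is finitely generated, applies the determinant trick twice to produce a $w$ with $(1-s_nw)r_{n+1}=0$ and $(1-s_nw)x_{n+1}=0$, then unwinds this identity in $T$. You instead construct an explicit common multiplier $s=\alpha-at_n$, where $\alpha$ witnesses $r_{n+1}\in r_nR$, the element $a=\alpha s_n-1$ annihilates $r_n$, and $t_n$ witnesses $\overline{x_{n+1}}\in R\overline{x_n}$; a two-line computation then gives $r_ns=r_{n+1}$ and $s\overline{x_n}=\overline{x_{n+1}}$ simultaneously, after which $x_{n+1}-sx_n\in N=r_nM$ supplies the required $m$. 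Your route is more elementary---it avoids the determinant trick entirely---and strictly more general: nowhere does it use that $M$ is finitely generated, so it actually establishes the $(\Leftarrow)$ direction without that hypothesis.
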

\begin{proof}

$(\Rightarrow)$
The proof is the same as the one for the domain case (\cite[Theorem~5.2(2)]{anderson1996factorization}).

$(\Leftarrow)$
Set $T:= \idealztn{R}{M}$ and let
$
(r_1,x_1)T \subseteq (r_2,x_2)T \subseteq \dotsb
$
be an ascending sequence of principal ideals of $T$. Also, suppose that
$ (r_i, x_i) = (s_i, y_i) (r_{i+1},x_{i+1}) $ for some $(s_i, y_i) \in T$.
Since $R$ is ACCP, there exists a $k \in \mathbb{N}$ such that $r_kR = r_{k+1}R = \dotsb$. Set $I := r_kR$ and $N := r_kM$. It is easy to see that, for every $n \geq k$,
\begin{equation}
\label{eq1}
s_{n} N = N, \; \text{and} \; s_{n} I = I,
\end{equation}
and
\begin{equation}
\label{eq2}
x_n = s_n x_{n+1} + r_{n+1}y_n \in s_n x_{n+1} + N.
\end{equation}

By (\ref{eq1}) and Lemma \ref{lemma1}, for some $m \in \mathbb{N}$, 
\begin{equation}
\label{eq4}
N + Rx_m = N + R x_{m+1} = \dotsb .
\end{equation}
 Also, by (\ref{eq1}) and (\ref{eq2}),
$x_n \in s_n (Rx_{n+1} + N) = s_n (Rx_n + N) $,
and so for every $n \geq \max(k, m)$
\begin{equation}
\label{eq3}
N+ Rx_n = s_n (N + Rx_n) .
\end{equation}

Without loss of generality and by way of contradiction, we may assume that
$
(r_1,x_1)T \subsetneq (r_2,x_2)T \subsetneq \dotsb
$
and that the equations (\ref{eq1}), (\ref{eq4}) and (\ref{eq3}) hold for any $n, m \in \mathbb{N}$.

By (\ref{eq1}), there exists a $v \in R$ such that $s_1vr_2 = r_2$, so $(1-s_1v)r_2= 0$ and hence $(1-s_1v)N = 0$ . On the other hand, by (\ref{eq4}) and (\ref{eq3}), in the $R$-module $M/N$, $Rs_1\overline{x_2} = R\overline{x_2}$ and so for some $u \in R$ we have  $(1 - s_1u)x_2 \in N$, and so $(1-s_1v)(1 - s_1u)x_2 = 0$. Since  
\[(1-s_1v)(1-s_1u) = (1 - s_1((u+v) - s_1uv)) ,\] for $w = (u+v) - s_1uv$, we have $(1-s_1w)r_2 = 0$ and $( 1-s_1w)x_2 = 0$.

Therefore
\begin{align*}
\left[ (1,0) - (s_1 , y_1)(s_1w^2, - w^2 y_1) \right] (r_2, x_2)
& = (1 - s_1^2w^2, 0)(r_2, x_2) \\
& = (1+s_1w, 0)( 1-s_1w, 0)(r_2, x_2) \\
& = 0 ,
\end{align*}
and so
\begin{align*}
(r_2, x_2) &= (s_1w^2, -w^2 y_1)(s_1 , y_1)(r_2, x_2) \\
& = (s_1w^2, -w^2 y_1)(r_1 , x_1),
\end{align*}
hence $(r_2, x_2) T \subseteq (r_1, x_1) T$ which is a contradiction.
\end{proof}

\begin{corollary}
Let $R$ be a ring. Then $R$ is ACCP if and only if $\idealztn{R}{R}$ is ACCP.
\end{corollary}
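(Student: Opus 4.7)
The plan is to deduce this immediately from Theorem \ref{ACCPidealztn} by taking $M = R$. Two trivial observations are needed. First, $R$ is finitely generated as a module over itself (generated by $1$), so the finite generation hypothesis of the theorem is satisfied. Second, the cyclic $R$-submodules of $R$ (viewed as an $R$-module) are precisely the principal ideals of $R$, so $R$ being ACCC as an $R$-module is exactly the same statement as $R$ being ACCP as a ring.

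With these observations in hand, the forward direction is immediate: if $R$ is ACCP, then by the second observation $R$ is also ACCC, and hence by Theorem \ref{ACCPidealztn} applied with $M = R$, $\idealztn{R}{R}$ is ACCP. For the converse, if $\idealztn{R}{R}$ is ACCP, then Theorem \ref{ACCPidealztn} yields both that $R$ is ACCP and that $R$ is ACCC; either of these gives what we want.

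There is no real obstacle here: the corollary is essentially a tautological instantiation of the theorem. The only thing worth mentioning explicitly in the writeup is the identification of ACCP for the ring $R$ with ACCC for the module $R$, which is the content reason why $M = R$ collapses both hypotheses of the theorem into a single condition.
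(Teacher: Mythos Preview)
Your proposal is correct and matches the paper's approach: the corollary is stated there without proof, as it follows immediately from Theorem~\ref{ACCPidealztn} with $M=R$, using exactly the two observations you made.
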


Next, we consider the bounded factorization property. When $R$ is a domain, the ring $\idealztn{R}{M}$ is a BFR if and only if $R$ is a BFD and $M$ is BF $R$-module (\cite[Theorem~5.2(3)]{anderson1996factorization}). First, we show that the ``if'' part of this result does not hold when $R$ is not a domain.

\begin{example}
Let $ S := \mathbb{Z}_2 \left[ \bigcup_{i \in \mathbb{N}} \{ X_{i, 1}, \dotsc ,X_{i, i+1} \} \right] $ and consider the following ideals in $S$:
\begin{align*}
I_1 &:= \left<  \{ X_{i,c_1} \dotsm X_{i, c_{i + 1}} \mid 1 \leq c_j \leq i+1, \; 1 \leq j \leq i + 1, \;  i \in \mathbb{N}  \} \right> \\
I_2 &:= \left< \{ X_{i,j}X_{k, l} \mid i,j,k,l \in \mathbb{N}, \, i \neq k, \, j \leq i + 1, \, l \leq k + 1
\} \right> \\
\intertext{and}
I_3 &:=\ \big< \big\{ \sum_{1 \leq j \leq i + 1} X_{i, 1} \dotsm \widehat{ X_{i , j}} \dotsm X_{i, i+1}  \\
&\qquad\qquad- \sum_{1 \leq j \leq i + 2} X_{i + 1, 1} \dotsm \widehat{ X_{i + 1 , j}} \dotsm X_{i + 1, i+2} \mid i, j \in \mathbb{N} \big\} \big> .
\end{align*}

We claim that the ring $R = \frac{S}{I_1 + I_2 + I_3}$ is a BFR, but the ring $\idealztn{R}{R}$ is not a BFR.

 Suppose on the contrary that there exist nonzero nonunit elements $f$ and $f_{i,j}$ such that $ f = f_{i,1} \dotsm f_{i, k_i} $, where $\{ k_i \}_{i \in \mathbb{N}}$ is not bounded. Let $x_{i,j}$ be the image of $X_{i, j}$ in $R$. Also, $\overline{F_{i,j}} = f_{i,j}$ and $\overline{F} = f$ for some $F, F_{i, j} \in S$. Moreover, we set $\beta_i := \sum_{1 \leq j \leq i + 1} X_{i, 1} \dotsm \widehat{ X_{i , j}} \dotsm X_{i, i+1}$.
 
 Suppose $m$ is the largest integer for which some $X_{m,t}$ appears in $F$. Let $v: S \rightarrow S$ be the homomorphism that sets any $X_{i,j}$ such that $i \geq m+1$ equal to $0$. We note that each nonunit in $R$ is actually nilpotent and so for any $A \in S$ with nonzero coefficient, $\overline{A}$ is a unit. We may consider each $F_{i,j}$ as a sum of proper subproducts of generators of $I_1$ since other terms become $0$ in $R$. If follows that for a large enough $i$, $v(F) = v(F_{i,1}) \dotsm v(F_{i, k_i}) = 0$ modulo $v(I)$. So, $F = v(F) \in v(I) \subseteq I_1 + I_2 + v(I_3)$. Now $v(I_3) \subseteq I_3 + v(S) \beta_m$, so $F - L\beta_m \in I$ for some $L \in v(S)$, and we may assume $L = 1$ since the product of any variable in $v(S)$ and $\beta_m$ is in $I_1 + I_2$. Therefore, $f = \overline{\beta_m}$. On the other hand, $\overline{\beta_1} = \overline{\beta_2} = \dotsb$ and so for every $i \in \mathbb{N}$, we have $f = \overline{\beta_i}$.

For every $i \in \mathbb{N}$, the element $\beta_i$ is irreducible in $S$ (and in any subring of $S$ resulting from removing some $X_{j,k}$ with $j \neq i$). One way to see this is to write $\beta_i$ as a polynomial with respect to the variable $X_{i,1}$, that is 
\[
\beta_i = X_{i,2} \dotsm X_{i, i+1} + \sum_{2 \leq j \leq i+1} \left( X_{i, 2} \dotsm \widehat{ X_{i , j}} \dotsm X_{i, i+1} \right) X_{i, 1} . 
\]
Since $S$ is a domain, the only way this polynomial can factor into two nonunits is that its coefficient have a nonunit common divisor. But $S$ is also a UFD and no $X_{i, j}$ ($2 \leq j \leq i+1$) divides  $\sum_{2 \leq j \leq i+1} \left( X_{i, 2} \dotsm \widehat{ X_{i , j}} \dotsm X_{i, i+1} \right)$. Hence, no such factorization is possible.

Now we show that $x_{1,1} + x_{1,2} = \overline{\beta_1} (=f)$ is irreducible in $R$. Suppose on the contrary that $f = gh$
where $g$ and $h$ are nonunit elements of $R$. Let $G, H \in S$ be such that $\overline{G} = g$ and $\overline{H} = h$. We get
\[
X_{1,1} + X_{1,2} - GH =  \sum_{1 \leq i \leq k} F_i (\beta_i - \beta_{i+1}) + F' + F''
\]
 
where $F_i \in S$, $F' \in I_1$, $F'' \in I_2$ (and, of course, $F_i = 0$ for every $i \gneq k$). For $A \in S$ let $\ell(A)$ denote the sum of the monomials of the least total degree and let $T(A)$ denote the total degree of $A$. We have $T(\ell(GH)) \geq 2$ since none of the elements $G$ and $H$ has a nonzero constant term for otherwise they would be units in $R$. So $\beta_1$ must appear on the right hand side and the only way for this is for $F_1$ to have $1$ as the constant term. After removing $\beta_1$ from each side we get
\[
 (-GH = )GH = \beta_2 + (F_1 - 1)(\beta_1 - \beta_2) + \sum_{2 \leq i \leq k} F_i (\beta_i - \beta_{i+1}) + F' + F'' .
\]
Set $X_{1,1} = X_{1,2}= 0$.  Now, either $(F_1 - 1)\beta_2 = 0$ or $T(\ell(F_1 - 1)\beta_2) \geq 3$; also, $T(\ell(F')) \geq 3$; so $\beta_2$ or any sum of its terms cannot appear in these parts. Moreover, we note that any monomial in $F''$ has a subproduct of the form $X_{i,j}X_{i',j'}$ where $i \neq i'$ and so this part of the right hand side also cannot contain $\beta_2$ or any sum of its terms. We cannot have $\ell(G)\ell(H) = \beta_2$ since either $(GH =) \ell(G)\ell(H) = 0$ or $0 \neq  \ell(G)\ell(H) = \beta_2$ which is not possible since as we saw $\beta_2$ is irreducible in $S$ (and this is also the case after setting $X_{1, 1}$ and $X_{1, 2}$ to $0$). Hence, the only remaining possibility is for $F_2$ to have a nonzero constant term. Now, we can repeat the argument until, eventually, we get the element $F_{k+1}$ with a nonzero constant term, and that is a contradiction. Therefore, $f = \beta_i$ is irreducible which of course means it has the BF property too. But this contradicts our initial assumption on $f$. Hence, $R$ is a BFR.
	
The ring $\idealztn{R}{R}$ is not a BFR, since for every $i \in \mathbb{N}$,
\[
(x_{1,1}, 1)(x_{1,2}, 1) = (0, \overline{\beta_1}) = (0, \overline{\beta_i}) =  (x_{i,1}, 1)(x_{i, 2}, 1) \dotsm (x_{i, i+1}, 1) .
\]
\end{example}

Let us call a factorization minimal if we cannot remove any factor from it. Formally, $x = a_1 \dotsm a_n$ is minimal if $x \neq \prod_{a_i \in A} a_i$ for any $A \subsetneq \{a_1, \dotsc , a_n \}$. Also, we say that an element $r \in R$ has the bounded minimal factorization property if the set
\begin{align*}
\{ n \!\in\! \mathbb{N} \,|\, \text{There exist nonunits }  a_1, \dotsc, a_n \in R  \text{ such that }  r \!=\! a_1 \dotsm a_n   \text{ is minimal} \}
\end{align*}
is bounded. For the element $0 \in R$, this property coincides with the notion of being \textit{U-bounded} which is defined in \cite[Section~4]{agargun2001factorization}, using the language of U-factorization. Although we avoided discussing U-factorizations in this paper, nevertheless we use the term U-bounded in this case.

\begin{proposition}
\label{BFRidealztn}
Let $R$ be a ring and let $M$ be an $R$-module.
\begin{enumerate}
\item
If $\idealztn{R}{M}$ is a BFR, then $R$ is BFR and $M$ is a BFM.
\item
Let $R$ be a BFR and $M$ a BFM. Also, suppose that $0 \in R$ is U-bounded. Then $\idealztn{R}{M}$ is a BFR.
\end{enumerate}
\end{proposition}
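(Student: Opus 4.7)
For part (1) I would lift factorizations through the two natural inclusions $r \mapsto (r,0)$ and $y \mapsto (0,y)$. Given a nonzero nonunit $r \in R$ and a factorization $r = r_1 \cdots r_n$ into nonunits of $R$, the element $(r,0)$ is a nonzero nonunit of $\idealztn{R}{M}$ and $(r,0) = (r_1,0)\cdots(r_n,0)$ exhibits it as a product of $n$ nonunits, so the BF property of $\idealztn{R}{M}$ bounds $n$. Likewise, given a nonzero $x \in M$ and a factorization $x = r_1 \cdots r_n y$ with $r_i$ nonunits of $R$, the identity $(0,x) = (r_1,0) \cdots (r_n,0)(0,y)$ writes the nonzero nonunit $(0,x)$ as a product of $n+1$ nonunits of $\idealztn{R}{M}$, which bounds $n$.

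For part (2), I would analyze an arbitrary factorization $(r,x) = (s_1,y_1) \cdots (s_n,y_n)$ of a nonzero nonunit of $\idealztn{R}{M}$ into nonunits. Expanding the product and using that each $s_i$ is a nonunit of $R$ gives
\[
r = s_1 \cdots s_n, \qquad x = \sum_{i=1}^{n} \Bigl(\prod_{j \neq i} s_j\Bigr) y_i.
\]
If $r \neq 0$, the BFR hypothesis on $R$ immediately bounds $n$ by $n_r$.

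The interesting case is $r = 0$, which forces $x \neq 0$ since $(r,x) \neq 0$. Here the plan is to invoke the U-boundedness of $0$: pick $I \subseteq \{1, \ldots, n\}$ of minimum cardinality with $\prod_{i \in I} s_i = 0$. By this minimality, $\prod_{i \in I} s_i = 0$ is a minimal factorization of $0$ into nonunits of $R$, so $|I| \leq N$, where $N$ is the U-bound of $0$. The crucial observation is that for every $i \notin I$ the subproduct $\prod_{j \neq i} s_j$ still contains the full zero product $\prod_{k \in I} s_k$, so the $i$-th term of the sum for $x$ vanishes. Extracting the common factor $\prod_{j \notin I} s_j$ from the surviving terms yields $x = \bigl(\prod_{j \notin I} s_j\bigr) z$ for some $z \in M$, and applying the BFM property to this factorization of the nonzero element $x$ into $n - |I|$ nonunits of $R$ gives $n - |I| \leq n_x$; hence $n \leq N + n_x$. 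Combining the two cases, $n$ is bounded in terms of $(r,x)$ alone. The main obstacle is the bookkeeping in the $r = 0$ case—verifying the vanishing of the off-$I$ cross terms and carefully extracting the common factor—but once $I$ is chosen the calculation is direct.
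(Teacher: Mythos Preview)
Your proposal is correct and follows essentially the same route as the paper. For part (2) the paper argues by contradiction but uses the identical idea: after reordering so that a minimal subproduct of the first coordinates vanishes, it groups those factors into a single element $(0,y_i)$ of $\idealztn{R}{M}$ and reads off $x = \bigl(\prod_{j\notin I} s_j\bigr)\,y_i$; your $z$ is exactly the paper's $y_i$, computed via the explicit cross-term formula instead of by multiplying inside the idealization.
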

\begin{proof}

(1) This is similar to \cite[Theorem~5.2(3)]{anderson1996factorization}.

(2)
Assume that $R$ is a BFR and $M$ is a BFM, but $\idealztn{R}{M}$ is not a BFR. Then, since $R$ is a BFR, there must exist a nonzero element $(0, x) \in \idealztn{R}{M}$ which does not have the BF property. Assume that
\[
(0,x) = (a_{i,1}, x_{i,1})(a_{i, 2}, x_{i,2}) \dotsm (a_{i, n_i}, x_{i, n_i}),
\]
are factorizations of $(0, x)$ where the set $\{n_i\}_{i \in \mathbb{N}}$ is not bounded. Without loss of generality, we may assume that $0 = a_{i,1}a_{i,2} \dotsm a_{i, k_i}$ are minimal factorizations of $0$. Since $0$ is U-bounded, the set $\{ k_i \}_{i \in \mathbb{N}}$ is bounded, and so the set $\{ n_i - k_i \}_{i \in \mathbb{N}}$ must be unbounded. But then since
\[
(0,x) = (0, y_i)(a_{i,k_{i}+1}, x_{i,k_{i}+1}) \dotsm (a_{i, n_i}, x_{i, n_i})
\]
for some $y_i \in M$, we have $x = a_{i, k_{i}+1} \dotsm a_{i, n_i} y_i$, and so $M$ is not a BFM, which is a contradiction.
\end{proof}

\begin{lemma}
\label{lemmaubounded}
Let $R$ be a ring.
\begin{enumerate}
\item
If $0$ is U-bounded, then the set $\mathrm{Min}(R)$ is finite.
\item
If $R$ is reduced, then the converse also holds.
\end{enumerate}
\end{lemma}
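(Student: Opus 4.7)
The plan is to exploit the correspondence between minimal primes and minimal factorizations of $0$, with prime avoidance as the main tool. For part (1), I will prove the contrapositive: if $\mathrm{Min}(R)$ is infinite then for every $n \geq 2$ there is a minimal factorization of $0$ of length $n$. Given $n$ distinct minimal primes $P_1, \dotsc, P_n$, pairwise incomparability of minimal primes together with prime avoidance yields elements $v_i \in P_i \setminus \bigcup_{j \neq i} P_j$. The product $v_1 \dotsm v_n$ lies in every $P_k$ and hence in $\mathrm{nil}(R)$, so $v_1^k \dotsm v_n^k = 0$ for some $k \in \mathbb{N}$. Setting $w_i := v_i^k$, I claim $0 = w_1 \dotsm w_n$ is a minimal factorization into nonunits: for any proper subset $A \subsetneq \{w_1, \dotsc, w_n\}$, choose $i_0$ with $w_{i_0} \notin A$; then the product over $A$ involves only $v_j^k$ with $j \neq i_0$, none of which lies in $P_{i_0}$ by construction and primality, so $\prod_{a \in A} a \notin P_{i_0}$ and is in particular nonzero. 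The factorization is thus minimal and its factors are nonunits (they lie in $P_i$).

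For part (2), assume $R$ is reduced with $\mathrm{Min}(R) = \{P_1, \dotsc, P_m\}$, and let $0 = a_1 \dotsm a_n$ be a minimal factorization into nonunits with $n \geq 2$ (note each $a_i$ must be nonzero, otherwise the singleton subset $\{0\}$ already gives product $0$, violating minimality). The strategy is to translate the problem into a combinatorial statement on subsets of $\{1, \dotsc, m\}$. For each $i$, put $S_i := \{k : a_i \notin P_k\}$; reducedness gives $\bigcap_k P_k = 0$, so each $S_i$ is nonempty. The equation $a_1 \dotsm a_n = 0$ forces $\bigcap_i S_i = \emptyset$, whereas minimality gives $\prod_{j \neq i} a_j \neq 0$, hence $\bigcap_{j \neq i} S_j \neq \emptyset$ for every $i$. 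Choosing $k_i$ in each such intersection, I observe $k_i \notin S_i$ (else $k_i$ would lie in the empty total intersection) and that the $k_i$ are pairwise distinct: if $k_i = k_{i'}$ with $i \neq i'$, then $k_i \in S_{i'}$ since $k_i \in \bigcap_{j \neq i} S_j$, while simultaneously $k_{i'} \notin S_{i'}$, a contradiction. Hence $n \leq m$, and $0$ is U-bounded.

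The main obstacle is the non-reduced case in (1): prime avoidance only produces $v_1 \dotsm v_n \in \mathrm{nil}(R)$, not $=0$. Raising to a suitable power fixes this, and crucially the primality of each $P_j$ ensures the power step preserves the key property $v_i^k \notin P_j$ for $j \neq i$ on which the minimality verification rests. Part (2) then reduces to a clean set-system argument once the reducedness hypothesis collapses the nilradical to zero.
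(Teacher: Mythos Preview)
Your argument for part (2) is correct and amounts to the same idea as the paper's proof, phrased in set-theoretic language; the paper simply observes that $r_1\dotsm r_n = 0$ in a reduced ring forces $\mathrm{Min}(R) \subseteq \bigcup_i \{\mathfrak{p} : r_i \in \mathfrak{p}\}$, so one can drop factors until at most $|\mathrm{Min}(R)|$ remain.

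Part (1), however, has a genuine gap. You choose $n$ minimal primes $P_1,\dotsc,P_n$ and elements $v_i \in P_i \setminus \bigcup_{j\neq i} P_j$, then assert that $v_1\dotsm v_n$ lies in $\mathrm{nil}(R)$ because it lies in every $P_k$. But ``every $P_k$'' here means only the $n$ primes you selected, whereas $\mathrm{nil}(R)$ is the intersection of \emph{all} minimal primes, of which there are infinitely many by hypothesis. There is no reason for $v_1\dotsm v_n$ to lie in a minimal prime $Q$ distinct from $P_1,\dotsc,P_n$, so the product need not be nilpotent and your construction of $w_i = v_i^k$ with $w_1\dotsm w_n = 0$ breaks down. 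Concretely, in a reduced ring with infinitely many minimal primes your $v_1\dotsm v_n$ is simply nonzero, and no power helps.

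The paper repairs exactly this point: it picks one extra minimal prime $\mathfrak{p}'$ and chooses the $a_i$ outside $\mathfrak{p}'$ as well, so that $\alpha = a_1\dotsm a_n$ is visibly \emph{not} nilpotent. Working in the reduced ring $R' = R/\mathrm{Nil}(R)$, one then uses that $\overline{\alpha}\neq 0$ is a zero divisor (its annihilator meets none of the $\overline{\mathfrak{p}_i}$ by prime avoidance) to produce a further element $\beta$ with $\beta\alpha$ nilpotent. The resulting factorization $0 = \beta^k a_1^k\dotsm a_n^k$ is the one whose minimal refinement must retain all $n+1$ distinct factors. Your approach can be salvaged along these lines, but as written the passage from ``in $P_1\cap\dotsb\cap P_n$'' to ``in $\mathrm{nil}(R)$'' is the missing idea.
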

\begin{proof}
(1)
Suppose on the contrary that $\mathrm{Min}(R)$ is infinite and let $n \in \mathbb{N}$.
Choose distinct $\mathfrak{p}' , \mathfrak{p}_1, \dotsc , \mathfrak{p}_n \in \mathrm{Min}(R)$, and using the Prime Avoidance Theorem, choose elements
\[
a_i \in \mathfrak{p}_i \setminus \bigcup_{1 \leq j \leq n , i \neq j} \mathfrak{p}_j \cup \mathfrak{p}' .
\]
In the ring $R' = R / \mathrm{Nil(R)} $, $\overline{\alpha} = \overline{a_1} \dotsm \overline{a_n} \neq \overline{0}$, since $\overline{\alpha} \notin \overline{\mathfrak{p}'}$. Since $R'$ is reduced, by \cite[Corollary~2.2]{huckaba1988commutative} and the Prime Avoidance Theorem again, $\mathrm{Ann}_{R'}(\overline{\alpha}) \nsubseteq \bigcup_{1 \leq i \leq n} \overline{\mathfrak{p}_i}$, and so there exists some
\[
\overline{\beta} \in \mathrm{Ann}_{R'}(\overline{\alpha}) \setminus \bigcup_{1 \leq i \leq n} \overline{\mathfrak{p}_i} .
\]

Therefore, $\beta a_1 \dotsm a_n \in \mathrm{Nil}(R)$, and so, for some $k \in \mathbb{N}$,
\[ {\beta}^k {a_1}^k \dotsm {a_n}^k = 0 . \]

Now any subproduct of $\beta^k {a_1}^k \dotsm {a_n}^k$ which is equal to $0$ must contain the elements $\beta, a_1 , \dotsc ,a_n$ since it belongs to every $P \in \mathrm{Min}(R)$. Hence $0$ is not U-bounded.

(2)
This is a special case of \cite[Lemma~4.16]{agargun2001factorization}. We provide a direct proof nevertheless. Since $r_1 \cdots r_n = 0$ if and only if
\[
\mathrm{Min}(R) \subseteq \bigcup_{1 \leq i \leq n} \{ \mathfrak{p} \in \mathrm{Spec}(R) \mid r_i \in \mathfrak{p} \},
\]
it follows that the factorization $0 = r_1 \cdots r_n$ can be refined to a factorization of a length less than or equal to $|\mathrm{Min}(R)|$.
\end{proof}

\begin{corollary}
Let $R$ be a reduced BFR with $|\mathrm{Min}(R)| < \infty $. Also, let $M$ be an $R$-module which is BFM. Then $\idealztn{R}{M}$ is a BFR.
\end{corollary}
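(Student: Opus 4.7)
The plan is to read this corollary as a direct composition of the two preceding results, with no new ideas required. The hypotheses ``$R$ is reduced'' and ``$|\mathrm{Min}(R)| < \infty$'' are exactly those of Lemma~\ref{lemmaubounded}(2), whose conclusion is that $0 \in R$ is U-bounded. Once we have this, the hypotheses of Proposition~\ref{BFRidealztn}(2) are all satisfied, namely: $R$ is a BFR, $M$ is a BFM, and $0 \in R$ is U-bounded. That proposition then yields directly that $\idealztn{R}{M}$ is a BFR.

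So the proof I would write is essentially a two-line invocation. First I would cite Lemma~\ref{lemmaubounded}(2) to deduce that $0$ is U-bounded in $R$ from the reducedness and finiteness of $\mathrm{Min}(R)$. Then I would cite Proposition~\ref{BFRidealztn}(2) with the assumptions (BFR, BFM, $0$ U-bounded) all in place to conclude that $\idealztn{R}{M}$ is a BFR.

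There is no real obstacle here: the only work consists in verifying that the hypotheses line up, which they do verbatim. If I wanted to make the proof slightly less telegraphic I could unwind the definition of U-bounded to emphasize that it is the statement needed in Proposition~\ref{BFRidealztn}(2), but this is purely cosmetic. In particular, no additional hypothesis on $M$ (such as finite generation, as in Theorem~\ref{ACCPidealztn}) is needed, since Proposition~\ref{BFRidealztn}(2) does not require it.
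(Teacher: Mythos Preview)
Your proposal is correct and matches the paper's own proof, which simply says ``It follows from Proposition~\ref{BFRidealztn} and Lemma~\ref{lemmaubounded}.'' You have identified precisely the two citations and the order in which they are applied.
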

\begin{proof}
It follows from Proposition \ref{BFRidealztn} and Lemma \ref{lemmaubounded}.
\end{proof}

Finally, we consider UFRs. In \cite[Corollary~9(3)]{chang2013factorization}, the authors showed that if $R$ is a field, then $\idealztn{R}{R}$ is a UFR. In the following theorem, we generalize this result. We recall that an SPIR is a local principal ideal ring with a nilpotent maximal ideal, and a semisimple module is a module that is the sum (or equivalently, direct sum) of its simple submodules. In the next theorem, we need the following result by Bouvier \cite{bouvier1974structure}: A ring $R$ is UFR if and only if 1) $R$ is a UFD, or 2) $(R, \mathfrak{m})$ is quasi-local and $\mathfrak{m}^2 = 0$, or 3) $R$ is an SPIR.

\begin{theorem}
Let $R$ be a ring and $M$ a nonzero $R$-module. The following are equivalent$:$
\begin{enumerate}
\item
$\idealztn{R}{M}$ is a UFR.
\item
$(R, \mathfrak{m})$ is quasi-local with $\mathfrak{m}^2 = 0$ and $\mathfrak{m}M = 0$.
\item
$(R, \mathfrak{m})$ is quasi-local with $\mathfrak{m}^2 = 0$ and $M$ is semisimple.
\item
 $\idealztn{R}{M}$ is pr\'{e}simplifiable and every nonzero nonunit element of \scalebox{0.97}{$\idealztn{R}{M}$} is an atom.
\end{enumerate}
\end{theorem}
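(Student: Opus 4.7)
My plan is to chain the equivalences through condition (2), using Bouvier's classification of UFRs (recorded just before the theorem) as the main lever. First, I would handle $(2)\Leftrightarrow(3)$: in a quasi-local ring $(R,\mathfrak{m})$ the only simple module up to isomorphism is $R/\mathfrak{m}$, so $M$ is semisimple exactly when it is a direct sum of copies of $R/\mathfrak{m}$, which is in turn equivalent to $\mathfrak{m}M=0$. The step $(2)\Rightarrow(1)$ is then a direct application of Bouvier: under (2) the maximal ideal of $T:=\idealztn{R}{M}$ is $\mathfrak{n}=\mathfrak{m}\times M$, and the product rule for ideals of $T$ yields $\mathfrak{n}^{2}=\mathfrak{m}^{2}\times\mathfrak{m}M=0$, so $T$ is quasi-local with $\mathfrak{n}^{2}=0$ and hence a UFR.

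For $(1)\Rightarrow(2)$ I would again invoke Bouvier, case by case. The UFD case is immediately ruled out because $(0,x)(0,y)=0$ gives nonzero zero-divisors as soon as $M\neq 0$. The case of $T$ quasi-local with square-zero maximal ideal gives (2) directly, once one uses the bijection between primes of $T$ and primes of $R$ to conclude that $R$ is quasi-local. The remaining SPIR case is the main technical obstacle. Here I would write $\mathfrak{n}=\ideal{(a,m)}$, so that $\mathfrak{m}=aR$ and $a$ is nilpotent since $\mathfrak{n}$ is. The submodule $0\times M$ is an ideal of $T$, and in an SPIR every ideal is a power of $\mathfrak{n}$. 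So either $0\times M=\mathfrak{n}$, in which case $\mathfrak{m}=0$ (so $R$ is a field) and (2) holds trivially, or $0\times M\subsetneq\mathfrak{n}$, forcing $0\times M\subseteq\mathfrak{n}^{2}=\ideal{(a^{2},2am)}$. Unpacking this inclusion writes every $x\in M$ in the form $2ram+ya^{2}\in aM=\mathfrak{m}M$, so $M=\mathfrak{m}M$; iterating against the nilpotency $\mathfrak{m}^{N}=0$ then forces $M=0$, contradicting $M\neq 0$.

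Finally, $(1)\Leftrightarrow(4)$ is largely formal. For $(1)\Rightarrow(4)$, any UFR is a BFR and hence pr\'{e}simplifiable (both facts are already noted in the introduction), and (2) gives $\mathfrak{n}^{2}=0$, so no nonzero element is a product of two nonunits; that is, every nonzero nonunit is an atom. Conversely, assume $T$ is pr\'{e}simplifiable and every nonzero nonunit is an atom. Any nonzero nonunit $x$ already admits the trivial factorization $x=x$ into one atom; given a competing factorization $x=a_{1}\cdots a_{n}$ into atoms with $n\geq 2$, applying that $x$ is an atom to the split $x=a_{1}\cdot(a_{2}\cdots a_{n})$ gives $x\sim a_{1}$ or $x\sim a_{2}\cdots a_{n}$, and pr\'{e}simplifiability then forces all but one of the $a_{i}$ to be a unit, contradicting atom-hood unless $n=1$. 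Thus every nonzero nonunit has a unique factorization into atoms, so $T$ is a UFR. The SPIR subcase of $(1)\Rightarrow(2)$ is the only step I expect to require genuine computation; the rest reduces to Bouvier's theorem or to short manipulations using pr\'{e}simplifiability.
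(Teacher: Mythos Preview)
Your proposal is correct and follows essentially the same route as the paper: both hinge on Bouvier's trichotomy for UFRs, compute $(\mathfrak{m}\times M)^{2}=\mathfrak{m}^{2}\times\mathfrak{m}M$, and handle $(4)\Rightarrow(1)$ via pr\'{e}simplifiability in the same way. The only noteworthy difference is in the SPIR subcase of $(1)\Rightarrow(2)$: the paper first observes that $0\times M$ principal forces $M$ cyclic and then applies Nakayama to $M=\mathfrak{m}^{\ell-1}M$, whereas you bypass finite generation entirely by iterating $M=\mathfrak{m}M$ against the nilpotency of $\mathfrak{m}$---a slightly cleaner endgame, but not a genuinely different strategy.
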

\begin{proof}

$(1) \Rightarrow (2)$
The ring $\idealztn{R}{M}$ cannot be a domain, let alone a UFD.

Now, suppose that $\idealztn{R}{M}$ is quasi-local with the maximal ideal $\idealztn{I}{M}$ and $(\idealztn{I}{M})^2 = 0$. Then since $( \idealztn{I}{M})^2 = \idealztn{I^2}{IM}$, the ring $R$ is quasi-local with the maximal ideal $I$, $I^2 = 0$, and $IM = 0$.

If $\idealztn{R}{M}$ is an SPIR, then $R$ is an SPIR and the maximal ideal of $\idealztn{R}{M}$ is $\idealztn{\mathfrak{m}}{M}$, where $\mathfrak{m}$ is the unique maximal ideal of $R$. The ideal $\idealztn{0}{M}$ is principal and so $M$ is cyclic. Also, for some $\ell \in \mathbb{N}$,
\[
(\idealztn{\mathfrak{m}}{M})^\ell = \idealztn{0}{M}.
\]
If $\ell > 1$, then $M = \mathfrak{m}^{\ell-1} M$, and so by Nakayama's Lemma, $M=0$ which is a contradiction. Hence $\ell = 1$, and so $\mathfrak{m} = 0$.

$(2) \Rightarrow (3)$ If $\mathfrak{m}M=0$, then $M$ is a vector space over $R/\mathfrak{m}$, and so $M$ is semisimple.

$(3) \Rightarrow (2)$ This follows since every simple submodule of $M$ is of the form $R/\mathfrak{m}$.

$(2) \Rightarrow (4)$ It is easy to see that a quasi-local ring is pr\'{e}simplifiable, and so the first part holds. The second part follows since $(\idealztn{\mathfrak{m}}{M}) ^ 2 = 0$, and so the product of any two nonunits is $0$.

$(4) \Rightarrow (1)$ Let $x$ be a nonzero nonunit element in $\idealztn{R}{M}$. If $x=yz$, then without loss of generality, $x \sim y$, and so $y=xt$ for some $t$. Now $x = xtz$, and so $z$ is a unit since $\idealztn{R}{M}$ is pr\'{e}simplifiable. Therefore, any factorization of $x$ is of length $1$, and so $\idealztn{R}{M}$ is a UFR.
\end{proof}

\noindent \textbf{Acknowledgement.}
The authors would like to thank the referee for careful reading of the paper and very helpful comments.

\bibliographystyle{acm}
\bibliography{reference}

\end{document}